\newtheorem{thm}{Theorem}[section]
\newtheorem{lem}[thm]{Lemma}
\newtheorem*{repeatthm}{Theorem \ref{thm:repeat}}
\newtheorem*{propnonum}{Proposition}
\newtheorem*{cornonum}{Corollary}
\newtheorem*{thmnonum}{Theorem}
\newtheorem*{factnonum}{Fact}
\newcommand{\p}{{\mathbb P}}
\newcommand{\nat}{{\mathbb N}}
\newcommand{\baire}{{ \nat ^{\nat}}}
\newcommand{\com}{{\mathcal K}}
\newcommand{\g}{{\mathcal G}}
\newcommand{\V}{{\mathcal V}}
\newcommand{\T}{\mathcal T}
\newcommand{\s}{\mathcal S}
\title{ $\p$-domination and Borel sets }
\author[basile]{D\'esir\'ee Basile}
\author[Darji] {Udayan B.~Darji}
\email[Basile]{de.basile@gmail.com}
\email[Darji]{ubdarj01@louisville.edu}
\address[Basile]{Dipartimento di matematica e informatica, viale Andrea Doria 6, 95125 Catania, Italia.}
\address[Darji]{Department of Mathematics, University of Louisville, Louisville, KY 40292, USA.}
\keywords{$\p$-Domination, $K$-Lusin sets, Borel sets}
\thanks{The second author thanks the hospitality of Universit\`a delgi Studi di Catania
and acknowledges support from GNSAGA of INDAM}
\subjclass[2000]{Primary: 54H05; Secondary: 03E15, 28A05}
\begin{document}
\begin{abstract}
In recent years much attention has been enjoyed by the topological spaces which are dominated by
second countable spaces. The origin of the concept dates back to the 1979 paper of Talagrand in
which it was shown that for a compact space $X$, $C_p(X)$ is dominated by $\p$, the set of irrationals, if and only if 
$C_p(X)$ is $K$-analytic. Cascales extended this result to spaces $X$ which are angelic and finally
in 2005 Tkachuk proved that the Talagrand result is true for all Tychnoff spaces $X$. In recent years,
the notion of $\p$-domination has enjoyed attention independent of $C_p(X)$. In particular, Cascales,
Orihuela and Tkachuk proved that a Dieudonn\`e complete space is K-analytic if and only if it is dominated 
by $\p$. A notion related to $\p$-domination is that of strong $\p$-domination. Christensen had earlier
shown that a second countable space is strongly $\p$-dominated if and only if  it is completely metrizable. We show
that a very small modification of the definition of $\p$-domination characterizes Borel subsets of Polish spaces.\end{abstract}
\maketitle
\section{introduction}
All spaces considered in this note are Tychnoff, i.e., completely regular and Hausdorff. If $X$ is a space,
then $\com(X)$ denotes the set all of  compact subsets of $X$  endowed with the Hausdorff  topology.
We let $\p$ denotes the set of irrational numbers with the topology it inherits from the reals. As we are only
interested in the topological properties of $\p$ in this note, for all practical purposes $\p$ is simply the Baire
space $\baire$.

The topic of this note dates back to a 1979 paper of Talagrand in functional analysis \cite{talagrand}.
In particular, he showed that if $X$ is a compact space then $C_p(X)$ is $K$-analytic if and only if  there exists a collection of 
compact set $\{A_p : p \in \p\}$  whose union is $X$ and satisfies the property that $A_p \subseteq A_q$ whenever $p < q$. 
Cascales \cite{c} extended this result by showing the above holds for spaces $X$ which are angelic and finally
in 2005 Tkachuk \cite{tkachuk} showed that the result holds for all Tychnoff spaces $X$.  Moreover, in his paper
Tkachuk initiated a systematic study of the spaces which are now called spaces with {\em $\p$-directed covers}, i.e., those spaces $X$
for which there exists a cover of compact sets $\{A_p: p \in \p\}$ such that $p < q$ implies $A_p \subseteq A_q$.

In 2011, Cascales, Orihuela and Tkachuk  \cite{cot} initiated the study of a related concept. If $X,M$ are topological
spaces, following \cite{cot}, we say that {\em $X$ is $M$-dominated}  if there exists a collection $\{A_K: K \in \com(M)\}$
of compact subsets of $X$ such that $\bigcup_{K }A_K =X$ and $K \subseteq L$ implies that $A_K \subseteq A_L$.
Among many other results in the paper, the following was proved.
\begin{propnonum}(Proposition~2.2, \cite{cot}) The following conditions are equivalent for any space $X$.
\begin{itemize}
\item $X$ has a $\p$-directed cover.
\item $X$ is $\p$-dominated.
\item $X$ is dominated by some Polish space.
\end{itemize}
\end{propnonum}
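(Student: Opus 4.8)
The plan is to prove the three conditions equivalent by running the cycle (first) $\Rightarrow$ (second) $\Rightarrow$ (third) $\Rightarrow$ (first). Identifying $\p$ with $\baire$ and writing $p \le q$ for the coordinatewise order, the two passages between a point-indexed family and a compact-indexed family are essentially order-theoretic bookkeeping, while all the genuine content sits in the implication (third) $\Rightarrow$ (first), where a family indexed by $\com(M)$ for an arbitrary Polish space $M$ must be pushed onto $\p$.

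For (second) $\Rightarrow$ (third) there is nothing to do, since $\p$ is itself Polish, so a $\p$-dominated space is dominated by a Polish space. For (first) $\Rightarrow$ (second) I would exploit the fact that a subset of $\baire$ is relatively compact exactly when it is bounded in the coordinatewise order. Given a $\p$-directed cover $\{B_p : p \in \p\}$, assign to each $K \in \com(\p)$ the point $p_K \in \baire$ defined by $p_K(n) = \max\{x(n) : x \in K\}$, a maximum that exists because $\pi_n(K)$ is a compact, hence finite, subset of $\nat$. Setting $A_K = B_{p_K}$ yields compact subsets of $X$, and $K \subseteq L$ forces $p_K \le p_L$ and hence $A_K \subseteq A_L$. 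Since every singleton $\{p\}$ has $p_{\{p\}} = p$, the family $\{A_K\}$ still covers $X$, giving the $\p$-domination.

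The heart of the matter is (third) $\Rightarrow$ (first). Suppose $\{A_L : L \in \com(M)\}$ witnesses that $X$ is dominated by a Polish space $M$. I would first manufacture a canonical $\p$-directed cover of $M$ itself with the extra feature that every compact subset of $M$ lies inside some member. To this end, embed $M$ as a $G_\delta$ subset of a compact metric space $Z$, say $M = \bigcap_n U_n$ with $U_n$ open and $F_n = Z \setminus U_n$ closed. For $m \ge 1$ put $C_{n,m} = \{z \in Z : d(z, F_n) \ge 1/m\}$, a compact subset of $U_n$, and for $p \in \baire$ set $L_p = \bigcap_n C_{n, p(n)+1}$. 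Each $L_p$ is closed in $Z$ and contained in $\bigcap_n U_n = M$, hence is a compact subset of $M$, and monotonicity $p \le q \Rightarrow L_p \subseteq L_q$ is immediate from $C_{n,m} \subseteq C_{n,m+1}$. The cofinality I want follows because any compact $L \subseteq M$ satisfies $d(L, F_n) > 0$ for every $n$ (a compact set disjoint from a closed set), so choosing $p(n)$ with $1/(p(n)+1) \le d(L,F_n)$ forces $L \subseteq L_p$; in particular the $L_p$ cover $M$.

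Finally I would transport this cover through the domination by setting $B_p = A_{L_p}$. Monotonicity is inherited from $L_p \subseteq L_q$, and for the covering any $x \in X$ lies in some $A_L$, whereupon the cofinality property yields $p$ with $L \subseteq L_p$, so $x \in A_L \subseteq A_{L_p} = B_p$. Thus $\{B_p : p \in \p\}$ is a $\p$-directed cover of $X$, closing the cycle. The main obstacle is precisely the construction of the previous paragraph: one must produce a $\p$-directed cover of the Polish parameter space $M$ that is cofinal in $\com(M)$ under inclusion, and it is the $G_\delta$-embedding together with the distance-to-complement sets that makes both the monotonicity and the cofinality manageable.
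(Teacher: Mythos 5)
The paper does not prove this proposition; it is quoted verbatim from \cite{cot} as background, so there is no in-text argument to compare yours against. Judged on its own, your cycle is correct and is essentially the standard argument from the cited source: (second) $\Rightarrow$ (third) is trivial; (first) $\Rightarrow$ (second) via $p_K(n)=\max\{x(n):x\in K\}$ works because compact subsets of $\baire$ have finite projections (you should dispose of $K=\emptyset$ explicitly, e.g.\ by $p_\emptyset\equiv 0$, so that $A_\emptyset\subseteq A_L$ for all $L$); and the real content, (third) $\Rightarrow$ (first), is handled correctly by embedding the Polish space $M$ as a $G_\delta$ in a compact metric $Z$ and building the sets $L_p=\bigcap_n\{z: d(z,Z\setminus U_n)\ge 1/(p(n)+1)\}$, which form a monotone family of compact subsets of $M$ that is cofinal in $\com(M)$ because a compact $L\subseteq M$ has positive distance to each $Z\setminus U_n$. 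Transporting through $B_p=A_{L_p}$ then gives the $\p$-directed cover. This cofinal-family construction is exactly the classical Christensen-type ingredient that \cite{cot} relies on, so your route is the expected one, and I see no gaps beyond the cosmetic empty-set remark.
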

Using the above proposition and an earlier result in \cite{co}, they obtained following corollary.
\begin{cornonum}\label{cor:cot}(Corollary 2.3, \cite{cot})
A Dieudonn\'e complete space is $K$-analytic if and only if  it is dominated by a Polish space.
\end{cornonum}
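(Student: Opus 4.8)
The plan is to obtain the corollary by combining the preceding Proposition~2.2 with a structural theorem of \cite{co}, so the real work is to pin down precisely what each ingredient supplies and then to verify the single implication that is not purely formal. By Proposition~2.2, for \emph{any} space the assertion ``$X$ is dominated by some Polish space'' is equivalent to ``$X$ has a $\p$-directed cover,'' i.e.\ a family $\{A_p : p \in \p\}$ of compact subsets of $X$ with $A_p \subseteq A_q$ whenever $p \le q$ (coordinatewise, under the identification $\p \cong \baire$) and $\bigcup_p A_p = X$. Thus the corollary reduces to the statement that for Dieudonn\'e complete $X$, being $K$-analytic is equivalent to carrying such a $\p$-directed compact cover, and it is this reformulation I would prove.

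First I would dispatch the forward implication, which uses no completeness hypothesis and holds for an arbitrary space. If $X$ is $K$-analytic there is an upper semicontinuous compact-valued map $\phi \colon \baire \to \com(X)$ whose images cover $X$. For $p \in \baire$ put $K_p = \{q \in \baire : q(n) \le p(n) \text{ for all } n\}$; each $K_p = \prod_n \{0,\dots,p(n)\}$ is compact, $K_p \subseteq K_q$ for $p \le q$, and $\bigcup_p K_p = \baire$. Setting $A_p = \phi[K_p] = \bigcup_{q \in K_p} \phi(q)$, upper semicontinuity together with compactness of $K_p$ forces $A_p \in \com(X)$, monotonicity of $p \mapsto K_p$ gives $A_p \subseteq A_q$, and coverage of $X$ by $\phi$ yields $\bigcup_p A_p = X$. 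Hence $\{A_p : p \in \p\}$ is a $\p$-directed compact cover, so $X$ is $\p$-dominated and, by Proposition~2.2, dominated by a Polish space.

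The reverse implication is where Dieudonn\'e completeness and the cited result of \cite{co} carry the essential weight, and this is the step I expect to be the main obstacle. A compact resolution by itself does \emph{not} make a space $K$-analytic: one must manufacture a \emph{single} usco map defined on all of $\baire$, and the naive attempt to read such a map off from $\{A_p\}$ breaks down because the limit points needed to assemble it need not lie in $X$ absent a completeness assumption. Here I would invoke the theorem of \cite{co} asserting that a Dieudonn\'e complete space possessing a compact resolution is $K$-analytic; completeness is precisely what guarantees that the relevant closures and cluster sets stay inside $X$, thereby upgrading the mere covering-and-monotonicity data of the resolution into the continuity that $K$-analyticity demands. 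Combining the two directions with the equivalences of Proposition~2.2 then delivers the stated biconditional.
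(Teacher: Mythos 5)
Your proposal follows exactly the route the paper indicates for this quoted result: reduce via Proposition~2.2 to the existence of a $\p$-directed compact cover, verify the easy direction by pushing the usco witness of $K$-analyticity forward along the compact sets $\prod_n\{0,\dots,p(n)\}$ (usco images of compacta are compact, as in the paper's Lemma~\ref{lem:easydirection}), and invoke the Cascales--Orihuela theorem from \cite{co} for the converse, which is where Dieudonn\'e completeness is used. The paper itself supplies no proof --- the corollary is simply cited from \cite{cot} with the remark that it follows from Proposition~2.2 and \cite{co} --- but your decomposition and attributions coincide with that derivation, and the details you fill in for the forward implication are correct.
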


In a very early paper, Christensen \cite{christensen}  proved a result concerning completely metrizability
of a second countable space. In the current terminology we say, following
\cite{cot}, that {\em space $X$ is strongly $M$-dominated} if $X$ is $M$-dominated by
a cover $\{A_K: K \in \com(M)\}$ such that each compact subset of $X$ is a subset of some $A_K$.

\begin{thmnonum}\label{thm:christensen} (Theorem 3.3, \cite{christensen})
A second countable space is strongly $M$-dominated by a Polish space if and only if it is completely metrizable.
\end{thmnonum}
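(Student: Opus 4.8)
The plan is to prove the two implications separately, handling the ``$\Leftarrow$'' direction by a direct construction and spending the real effort on the converse.

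For $X$ completely metrizable and second countable (hence Polish) I fix a complete compatible metric $d$ and a dense sequence $(x_i)_{i\in\nat}$, and for $\alpha\in\baire$ I set
\[ F_\alpha \;=\; \bigcap_{n\in\nat}\ \bigcup_{i\le\alpha(n)} \overline{B}\left(x_i,2^{-n}\right). \]
Each $F_\alpha$ is closed and, being covered for every $n$ by finitely many balls of radius $2^{-n}$, is totally bounded, hence compact. The family is monotone ($\alpha\le\beta$ pointwise gives $F_\alpha\subseteq F_\beta$), it covers $X$ (density lets one read off an $\alpha$ with $x\in F_\alpha$), and it swallows compacta (compactness of $C$ yields for each $n$ a finite subcover by $2^{-n}$-balls, whose indices define an $\alpha$ with $C\subseteq F_\alpha$). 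Reindexing by $\com(\p)$ via $\alpha_K(n)=\max\{\sigma(n):\sigma\in K\}$ and $A_K=F_{\alpha_K}$ turns this into a strong $\p$-domination, and I will reuse this easy direction as a lemma below.

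For the converse, suppose $X$ is second countable and strongly $M$-dominated by a Polish space $M$, say by $\{A_L:L\in\com(M)\}$. First I reduce to $M=\p$: by the easy direction $M$ itself carries a monotone cofinal family $\{B_K:K\in\com(\p)\}$ of compact sets, and then $A'_K:=A_{B_K}$ is a strong $\p$-domination of $X$, the covering and swallowing properties transferring because $\{B_K\}$ is cofinal in $\com(M)$. Now $X$ is in particular $\p$-dominated, so by the Proposition and Corollary quoted above (a metrizable space being Dieudonn\'e complete) $X$ is $K$-analytic, hence, being separable metrizable, analytic; fixing a metric completion $\widehat X$ I regard $X$ as an analytic subset of the Polish space $\widehat X$. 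By Hurewicz's dichotomy, either $X$ is $G_\delta$ in $\widehat X$ — whence $X$ is Polish and we are done — or $X$ contains a subset $Q$, closed in $X$, homeomorphic to $\mathbb{Q}$. Since strong $\p$-domination passes to closed subspaces (intersect the cover with $Q$; compact subsets of $Q$ are compact in $X$, so still swallowed), it suffices to rule out the second alternative by showing that $\mathbb{Q}$ is \emph{not} strongly $\p$-dominated.

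This is the heart of the matter and the main obstacle. In Tukey-theoretic terms, strong $\p$-domination of $Y$ says exactly that $(\com(Y),\subseteq)$ is Tukey reducible to $(\com(\p),\subseteq)\equiv_T\baire$, so the task is to prove $\com(\mathbb{Q})\not\le_T\baire$. The naive invariants do not help — both directed sets have additivity $\aleph_0$ (use singletons) and cofinality $\mathfrak d$ — so a genuinely finer argument is needed. The route I would take is a category/diagonalization argument: a monotone map $\baire\to\com(\mathbb{Q})$ is, on a comeager and hence dominating set of indices, bounded inside a fixed $\sigma$-compact set, and one then exploits the swallowing property together with the fact that every compact subset of $\mathbb{Q}$ is nowhere dense to produce a convergent sequence in $\mathbb{Q}$ (a compact set) contained in no single member of the cover, contradicting swallowing; equivalently one may invoke the computation of the cofinal type of $\com(\mathbb{Q})$ (Fremlin; Solecki--Todorcevic) or Christensen's original direct diagonalization. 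Turning ``bounded on a comeager set'' into the escape of one concrete compact set is the delicate point; once it is in hand, the dichotomy forces $X$ to be Polish and the proof closes.
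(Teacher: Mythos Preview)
The paper does not prove this statement; it is quoted from Christensen's monograph as background, so there is no in-paper argument to compare against. What follows is an assessment of your attempt on its own.

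Your easy direction, the reduction from a general Polish $M$ to $\p$, the passage of strong $\p$-domination to closed subspaces, and the Hurewicz-type reduction to ruling out $X=\mathbb{Q}$ are all sound. The genuine gap is exactly where you locate it: the claim that $\mathbb{Q}$ is not strongly $\p$-dominated, i.e.\ $\com(\mathbb{Q})\not\le_T\baire$. Your sketch for this step does not work as written. Monotonicity of a map $\baire\to\com(\mathbb{Q})$ carries no measurability or Baire-property information whatsoever, so the assertion that such a map is ``on a comeager set bounded inside a fixed $\sigma$-compact set'' is unjustified and the category mechanism never starts. Falling back on ``Christensen's original direct diagonalization'' is circular --- that diagonalization \emph{is} the theorem you are asked to prove --- and invoking the Fremlin or Solecki--Todorcevic determination of the cofinal type of $\com(\mathbb{Q})$ outsources the entire difficulty to results at least as hard as the target statement. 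What is actually required is a hands-on recursion: from a putative monotone cofinal family $\{A_\alpha\}_{\alpha\in\baire}$ in $\com(\mathbb{Q})$ one builds, along a tree of finite sequences in $\nat^{<\nat}$, a convergent sequence $q_n\to q$ in $\mathbb{Q}$ whose closure $\{q\}\cup\{q_n:n\in\nat\}$ escapes every $A_\alpha$, with the choices arranged so that every branch of $\baire$ is eventually excluded. That bookkeeping is precisely Christensen's contribution, and your write-up does not supply it.
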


Let us make some observations now. Suppose for the moment that $X$ is a separable metric space. Recall that in separable metric
spaces the notions of $K$-analytic and analytic coincide.  Hence, Corollary~\ref{cor:cot}
says that $X$ is dominated by $\p$ if and only if $X$ is analytic. Whereas Theorem~\ref{thm:christensen} says
that $X$ is strongly dominated by $\p$ if and only if $X$ is an absolute $G_{\delta}$ set. There is a big gap between
absolute $G_{\delta}$ sets and analytic sets. In this note we introduce a very small modification of the definition of $\p$
domination which characterizes Borel sets in the setting of Polish spaces. Let us first observe the following
simple fact.
\begin{factnonum}\label{fact:reform}
Space $X$ is $M$-dominated if and only if there exists a compact family $\{A_K : K \in \com(M) \}$
such that $\bigcup_{K \in \com(M)} A_K =X$ and $A_{K \cap L} \subseteq A_K \cap A_L$ for all $K, L \in \com (M)$.
\end{factnonum}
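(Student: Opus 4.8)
The plan is to observe that, for a family $\{A_K : K \in \com(M)\}$ of compact subsets of $X$ indexed by the compact subsets of $M$ ordered by inclusion, the monotonicity condition appearing in the definition of $M$-domination and the intersection condition $A_{K \cap L} \subseteq A_K \cap A_L$ are logically equivalent. Consequently both directions of the stated equivalence can be witnessed by the \emph{same} family, and no new construction is required; the only thing to verify is the pair of one-line implications relating the two conditions.

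For the forward direction, I would start from a witnessing family $\{A_K\}$ for $M$-domination, so that $\bigcup_K A_K = X$ and $K \subseteq L$ implies $A_K \subseteq A_L$. Since $K \cap L \subseteq K$ and $K \cap L \subseteq L$, monotonicity immediately yields $A_{K \cap L} \subseteq A_K$ and $A_{K \cap L} \subseteq A_L$, whence $A_{K \cap L} \subseteq A_K \cap A_L$. Thus the very same family is a compact family with the desired intersection property and the same union, establishing one implication.

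For the reverse direction, I would begin with a compact family $\{A_K\}$ satisfying $\bigcup_K A_K = X$ and $A_{K \cap L} \subseteq A_K \cap A_L$, and recover monotonicity directly: if $K \subseteq L$ then $K \cap L = K$, so applying the hypothesis gives $A_K = A_{K \cap L} \subseteq A_K \cap A_L \subseteq A_L$. Hence $K \subseteq L$ implies $A_K \subseteq A_L$, the family covers $X$, and each member is compact, so $X$ is $M$-dominated by this family.

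Since both implications reduce to the elementary set-theoretic facts $K \cap L \subseteq K, L$ and $K \subseteq L \Rightarrow K \cap L = K$, I do not anticipate any genuine obstacle; the entire content of the fact is the recognition that the intersection reformulation is an equivalent, and for the later Borel characterization more convenient, way of expressing the coherence of the cover. I would therefore present it as a short direct verification rather than invoking any topological machinery.
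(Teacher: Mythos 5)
Your verification is correct and is exactly the elementary argument the paper has in mind when it states this as a fact without proof: monotonicity gives $A_{K\cap L}\subseteq A_K\cap A_L$ since $K\cap L\subseteq K$ and $K\cap L\subseteq L$, and conversely the intersection condition applied with $K\cap L=K$ recovers monotonicity, so the same family witnesses both formulations. No gaps; the implicit use that $K\cap L$ is again compact (hence a valid index) is automatic since all spaces here are Hausdorff.
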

We show that making a simple change of replacing $\subseteq$ by $=$ leads to a characterization of Borel sets in the setting of Polish spaces. More precisely, we say that space $X$ is {\em strictly $M$-dominated} if there exists a compact family 
$\{A_K:K \in \com(M)\}$ such that $\bigcup_{K \in \com(M)} A_K =X$ and 
\[   K, L \in \com(M)  \implies  \left [ A_{K \cap L}  = A_K \cap A_L \right ].
\]

The main result of this note is to show the following theorem.
\begin{repeatthm} Let $X$ be a separable metric space and $X^*$ be its completion. $X$ is Borel in $X^*$ if and only if $X$ is strictly $\p$-dominated.
\end{repeatthm}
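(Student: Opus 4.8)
The plan is to prove the two implications separately, identifying $\p$ with the Baire space $\baire$ throughout. For $f \in \baire$ write $\widehat f = \{x \in \baire : x(n) \le f(n) \text{ for all } n\}$ for the associated compact ``box''; every $K \in \com(\p)$ is a closed subset of some $\widehat f$, the boxes form a cofinal monotone family, and $\widehat f \cap \widehat g = \widehat{\min(f,g)}$. The strict condition applied to boxes therefore reads $A_{\widehat f} \cap A_{\widehat g} = A_{\widehat{\min(f,g)}}$, which is the identity I will exploit in the converse.

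For the implication \emph{Borel $\Rightarrow$ strictly $\p$-dominated} I would start from the Luzin--Souslin representation: if $X$ is Borel in the Polish space $X^*$ then there are a closed $F \subseteq \baire$ and a continuous bijection $f : F \to X$. (One refines the topology of $X^*$ to a Polish topology with the same Borel sets in which $X$ becomes closed, hence Polish, writes that Polish space as a continuous bijective image of a closed subset of $\baire$, and composes back with the identity, which stays continuous.) I then set $A_K = f(K \cap F)$. Each $A_K$ is compact in $X$, the union over all $K$ is $f(F)=X$, and the family is upper semicontinuous since $\limsup_n (K_n \cap F) \subseteq K\cap F$ whenever $K_n \to K$. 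Injectivity of $f$ is precisely what yields exactness: as $(K\cap L)\cap F = (K\cap F)\cap(L\cap F)$ and $f$ is one-to-one, $A_{K\cap L} = f\big((K\cap F)\cap(L\cap F)\big) = f(K\cap F)\cap f(L\cap F) = A_K\cap A_L$.

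For the converse, \emph{strictly $\p$-dominated $\Rightarrow$ Borel}, note first that the monotone cover by the $A_{\widehat f}$ is a $\p$-directed cover, so $X$ is $K$-analytic, hence analytic because $X$ is separable metric. By Suslin's theorem it suffices to show $X$ is coanalytic, which I obtain by realizing $X$ as a Borel projection. Fix $x \in X$ and put $S_x = \{f \in \baire : x \in A_{\widehat f}\}$. By the box identity $A_{\widehat f}\cap A_{\widehat g} = A_{\widehat{\min(f,g)}}$, the set $S_x$ is nonempty, upward closed, and closed under coordinatewise minima, so the family $\{\widehat f : f \in S_x\}$ is downward directed with intersection $\widehat{h(x)}$, where $h(x)$ is the pointwise infimum of $S_x$. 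Upper semicontinuity of the family along this directed net then gives $x \in \bigcap_{f\in S_x} A_{\widehat f} \subseteq A_{\widehat{h(x)}}$, so the infimum is attained and $h(x)$ is the least element of $S_x$.

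The map $x \mapsto h(x)$ is the device converting strictness into Borelness. Its graph
\[
G = \{(x,m) \in X^* \times \baire : x \in A_{\widehat m} \text{ and } x \notin A_{\widehat{m'}} \text{ whenever } m' < m \text{ differs from } m \text{ in one coordinate}\}
\]
is Borel as soon as $R = \{(m,x) : x \in A_{\widehat m}\}$ is Borel in $\baire \times X^*$, which again follows from upper semicontinuity of $m \mapsto A_{\widehat m}$. For $x \in X$ the section $G_x$ equals the singleton $\{h(x)\}$, and $X = \pi_{X^*}(G)$; since $G$ is Borel with countable vertical sections, the Luzin--Novikov theorem makes the projection $X$ Borel in $X^*$. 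I expect the main obstacle to be exactly the role of upper semicontinuity, which is used both to attain the infimum $h(x)$ and to know that $R$ is Borel rather than merely analytic: one must either build this into the meaning of a \emph{compact family} or show that a strict $\p$-domination can be replaced by an upper semicontinuous one \emph{without destroying} the equality $A_{K\cap L}=A_K\cap A_L$, whereas the usual regularizations of set-valued maps need not preserve this exactness.
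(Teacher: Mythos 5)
Your forward direction (Borel $\Rightarrow$ strictly $\p$-dominated) is correct and is essentially the paper's Lemma~\ref{lem:easydirection} in different clothing: a continuous bijection $f:F\to X$ from a closed $F\subseteq\baire$ is exactly a $K$-Lusin witness, and your computation $A_{K\cap L}=f\bigl((K\cap F)\cap(L\cap F)\bigr)=A_K\cap A_L$ via injectivity is the same as the paper's $A_K=\bigcup f(K)$ for a disjoint-valued upper semicontinuous map. The converse, however, has a genuine gap --- the very one you flag in your last sentence --- and it is precisely the gap whose closure constitutes the paper's main technical work. The definition of strict $\p$-domination imposes \emph{no} regularity on the assignment $K\mapsto A_K$: it is an arbitrary family of compact sets subject only to the covering condition and exactness of \emph{finite} intersections. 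Your argument needs upper semicontinuity in two essential places: (i) to pass from closure of $S_x$ under coordinatewise minima to the attainment $x\in A_{\widehat{h(x)}}$, which is a directed \emph{infinite}-intersection statement, $\bigcap_{f\in S_x}A_{\widehat f}\subseteq A_{\widehat{h(x)}}$, not deducible from finite exactness alone; and (ii) to know that $R=\{(m,x):x\in A_{\widehat m}\}$ is Borel, without which $G$ is not Borel and Luzin--Novikov cannot be invoked. Neither is available from the hypothesis, and you give no way to obtain them; as stated, the converse is an outline conditional on an unproved regularization.

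The paper fills exactly this hole with Lemma~\ref{lem:borel1}: starting from an arbitrary strict domination $\{F_K\}$ it defines
\[
A_K=\bigcap_{n=1}^{\infty}\bigcup\left\{F_L: L\in\com(\p),\ K\subseteq L\subseteq U_n(K)\right\},
\]
and proves --- this is the delicate part, occupying most of the proof --- that this regularized family is upper semicontinuous, still covers $X$, \emph{still satisfies} $A_{K\cap L}=A_K\cap A_L$, and moreover satisfies the countable-intersection identity $(\dagger)$, which is exactly the attainment your step (i) requires. Your worry that ``the usual regularizations of set-valued maps need not preserve this exactness'' is sound as a general principle; the content of Lemma~\ref{lem:borel1} is that this particular one does, and the verification uses the strictness of $\{F_K\}$ itself on approximating pairs (e.g.\ $F_{K_m}\cap F_{L_m}=F_{K_m\cap L_m}$) in a nontrivial way. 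Once that lemma is granted, your minimal-witness construction is essentially the paper's Lemma~\ref{lem:borel2}: the paper takes $K=\bigcap\{L\in\com(\p):p\in A_L\}$ rather than a pointwise infimum over boxes, uses $(\dagger)$ to conclude $p\in A_K$, and finishes with an injective Borel projection (Luzin--Suslin) where you use Luzin--Novikov; those endgames are interchangeable (your intermediate remark that it ``suffices to show $X$ is coanalytic'' is a red herring, since your $G$ argument, if repaired, yields Borelness directly). So the skeleton you identify is right, but without a proof of the regularization lemma the converse direction does not stand.
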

This new definition may characterize $K$-Lusin sets in Dieudonn\'e complete spaces. We have been unable to determine  this. However, when possible we prove auxiliary lemmas in as general settings as we know
how. 
\section{terminology and notation}
We use the standard terminology of general topology and descriptive set theory.
Our terminology concerning general topology may be found in \cite{engleking}, \cite{willard}. Our terminology
concerning descriptive set theory in abstract setting can be found in \cite{rogersjayne}.
For the convenience of the reader, we state below the specific definitions which we use in our proofs.

Let $X, Y$ be topological space and $\com (Y)$ the set of all compact subsets of  $Y$ endowed with the Hausdorff topology. 
Let $f: X \rightarrow \com (Y)$. Then, {\em $f$ is upper semicontinuous } if for every $x \in X$ 
and $V$ open in $Y$ with $f(x) \subseteq V$, there exists $U$ open in $X$ containing $x$ such that
$f(y) \subseteq V$ for all $y \in U$. 

A space  {\em  $X$ is $K$-analytic} if there exists an upper semicontinuous $f:\p \rightarrow \com (X)$
such that $X = \bigcup_{x \in X} f(x)$. If, moreover, $f$ has the property that $f(x) \cap f(y) = \emptyset$
whenever $x \neq y$, then {\em $X$ is said to be $K$-Lusin.} We recall the classical result that if $X$ is 
a separable metric space, then $X$ is $K$-Lusin if and only if $X$ is a Borel subset of $X^*$, the completion
of $X$. 
\section{main result}
In this section we give the proof of the main result.
\begin{lem}\label{lem:easydirection}
Let $X$ be a Tychnoff space which is  $K$-Lusin. Then, $X$ is strictly $\p$-dominated.
\end{lem}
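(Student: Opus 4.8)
The plan is to build the required family directly from a $K$-Lusin representation of $X$. By hypothesis there is an upper semicontinuous map $f \colon \p \to \com(X)$ with $X = \bigcup_{p \in \p} f(p)$ and $f(p) \cap f(q) = \emptyset$ whenever $p \neq q$. For each $K \in \com(\p)$ I would set
\[
A_K = \bigcup_{p \in K} f(p),
\]
and claim that $\{A_K : K \in \com(\p)\}$ witnesses strict $\p$-domination. The guiding observation is that the disjointness built into the definition of $K$-Lusin is exactly the ingredient needed to upgrade the inclusion $A_{K \cap L} \subseteq A_K \cap A_L$ of Fact~\ref{fact:reform} to the equality required by strict domination.

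First I would check that each $A_K$ is compact, so that $\{A_K\}$ is genuinely a family of compact subsets of $X$. This is the one mildly nonroutine point, and it rests on the standard fact that the union of an upper semicontinuous compact-valued map over a compact index set is compact. Concretely, given an open cover $\mathcal V$ of $A_K$ in $X$, cover each $f(p)$ (for $p \in K$) by finitely many members of $\mathcal V$ with union $V_p$, use upper semicontinuity to find an open $U_p \ni p$ with $f(q) \subseteq V_p$ for all $q \in U_p \cap K$, extract a finite subcover $U_{p_1}, \dots, U_{p_n}$ of the compact set $K$, and observe that $V_{p_1} \cup \cdots \cup V_{p_n}$ is a finite subcollection of $\mathcal V$ covering $A_K$. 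The covering property $\bigcup_K A_K = X$ is then immediate: since singletons are compact, $A_{\{p\}} = f(p)$, so $\bigcup_{K \in \com(\p)} A_K \supseteq \bigcup_{p \in \p} f(p) = X$, while trivially each $A_K \subseteq X$.

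It remains to verify $A_{K \cap L} = A_K \cap A_L$ for all $K, L \in \com(\p)$. The inclusion $\subseteq$ is immediate from monotonicity, since $K \cap L \subseteq K$ and $K \cap L \subseteq L$, and makes no use of disjointness. For the reverse inclusion, suppose $x \in A_K \cap A_L$; then $x \in f(p)$ for some $p \in K$ and $x \in f(q)$ for some $q \in L$, so $f(p) \cap f(q) \ni x$ is nonempty, and the $K$-Lusin disjointness forces $p = q$. Hence $p \in K \cap L$ and $x \in f(p) \subseteq A_{K \cap L}$. This is precisely the step where the hypothesis is essential, and it is the conceptual heart of the argument rather than the compactness bookkeeping. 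Assembling the three verifications shows that $\{A_K : K \in \com(\p)\}$ is a family of compact sets covering $X$ with $A_{K \cap L} = A_K \cap A_L$, which is exactly the assertion that $X$ is strictly $\p$-dominated.
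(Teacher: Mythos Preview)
Your proof is correct and follows essentially the same approach as the paper: define $A_K = \bigcup_{p\in K} f(p)$ from a $K$-Lusin map $f$, verify compactness of $A_K$ via upper semicontinuity exactly as you did, and use the disjointness condition to obtain $A_{K\cap L} = A_K \cap A_L$. The only difference is one of exposition---the paper declares the intersection identity and the covering property ``clear,'' whereas you spell out how the $K$-Lusin disjointness forces $p=q$ in the key step.
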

\begin{proof}
Let $f: \p \rightarrow \com (X)$ witness the fact that $X$ is $K$-Lusin. For each
$K \subseteq \com (\p)$, let $A_K = \bigcup f(K)$. Clearly, $A_K \cap A_L = A_{K \cap L}$
and $X =\bigcup \{ K \in \com (\p): A_K\}$. To complete the proof, we need to verify that $A_K$
is compact.  Let $\g$ be  an open cover of $A_K$.  Hence $\g$ is also an open cover of $f(x)$
for each $x \in K$. As $f(x)$ is compact, we may choose a finite subcover $\V_x$ of $\g$ which covers $f(x)$.
Now, using the upper semicontinuity of $f$, we obtain an open set $U_x$ in $\p$, containing $x$, such that whenever $y \in U_x$,
we have that $f(y) \subseteq \bigcup \V_x$. As $K$ is compact, there exists $x_1, \ldots, x_n$ 
such that $\bigcup_{i=1}^n U_{x_i} \supseteq K$. Then, it follows that $\bigcup_{i=1}^n \V_{x_n}$ is
a finite subset of $\g$ which covers $A_K$. 
\end{proof}
\begin{lem}\label{lem:borel1}
Suppose that $X$ is a metric space which is  strictly dominated by $\p$. Then,  there exists an upper semicontinuous assignment $A:\com (\p) \rightarrow \com (X)$, given by $K \rightarrow A_K$, such that $X = \bigcup \{A_K: K \in \com (\p) \}$ and satisfies
the following condition:

\[ \tag{$\dagger$} \left [ K_1, K_2, \dots \in \p(\com) \right ] \implies  \left [\bigcap_{n=1}^{\infty} A_{K_n} = A _{\bigcap _{n=1}^{\infty} K_n} \right ] .\] 

\end{lem}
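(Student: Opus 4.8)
The plan is to decouple the two new requirements and observe that, for a monotone compact-valued family, they are essentially the same phenomenon. First I would record the equivalence: for a monotone $A\colon\com(\p)\to\com(X)$, upper semicontinuity at $K$ is equivalent to \emph{left-continuity along decreasing compacta}, i.e. $M_1\supseteq M_2\supseteq\cdots$ with $\bigcap_j M_j=K$ implies $\bigcap_j A_{M_j}=A_K$. The nontrivial inclusion is proved by contradiction: if $p\in\bigcap_j A_{M_j}\setminus A_K$, separate $p$ from the compact set $A_K$ by an open $V\supseteq A_K$ with $p\notin V$, use u.s.c. to obtain an open $U\supseteq K$ with $A_L\subseteq V$ for every compact $L\subseteq U$, and note that the decreasing compacta $M_j$ satisfy $M_J\subseteq U$ for some $J$ (the sets $M_j\setminus U$ are decreasing compacta with empty intersection), whence $p\in A_{M_J}\subseteq V$, a contradiction; the converse is the same argument run backwards. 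Moreover, once the family preserves \emph{finite} meets, condition $(\dagger)$ for an arbitrary countable family $K_1,K_2,\dots$ reduces to the decreasing case by passing to partial intersections $M_j=K_1\cap\cdots\cap K_j$, since then $A_{M_j}=A_{K_1}\cap\cdots\cap A_{K_j}$ and $\bigcap_j M_j=\bigcap_n K_n$. So it suffices to manufacture from the strictly dominating family a monotone, compact-valued, covering family that preserves finite meets and is left-continuous.

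The construction rests on a structural feature of $\baire$ that compensates for its failure to be locally compact: a sequence of compacta converging in the Hausdorff metric has relatively compact union. Indeed, if $L_n\to K$, then on each fixed coordinate the points of $L_n$ eventually agree with points of $K$, so $\bigcup_n L_n$ is coordinatewise bounded, hence contained in a compact box. This is exactly what lets enlargements be absorbed into a single compact set. The natural first attempt is the upper regularization
\[ A^{*}_K=\bigcap\bigl\{\,\overline{A(U)} : U\ \text{open},\ K\subseteq U\,\bigr\},\qquad A(U)=\bigcup\{\,A_L : L\in\com(\p),\ L\subseteq U\,\}, \]
with closures taken in $X$; monotonicity, $A_K\subseteq A^{*}_K$, and covering are immediate. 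Compactness of $A^{*}_K$ I would prove by a diagonal argument: a sequence in $A^{*}_K$ yields, for each member, approximants lying in $A_L$ with $L$ inside the $1/m$-neighborhood of $K$; choosing a diagonal with the neighborhood indices tending to infinity forces these $L$'s to converge to $K$, so by the structural fact they sit in one compact box, their $A$-images sit in a single compact $A_{K^{\sharp}}$, and one extracts a convergent subsequence whose limit remains in the closed set $A^{*}_K$. The same absorbing trick yields upper semicontinuity of $A^{*}$.

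The main obstacle is that this closure regularization can destroy finite meet-equality: $\overline{A(U_1)}$ and $\overline{A(U_2)}$ may intersect even when $U_1\cap U_2=\emptyset$, so $A^{*}_{K\cap L}=A^{*}_K\cap A^{*}_L$ need not hold, and without it $(\dagger)$ is out of reach. My plan to repair this is to regularize in a meet-respecting way, replacing open enlargements by a canonical decreasing sequence of \emph{compact} supersets $M_j(K)$ — obtained by truncating the defining tree of $K$ inside its minimal bounding box — chosen functorially so that $M_j(K)\cap M_j(L)$ and $M_j(K\cap L)$ both decrease to $K\cap L$. Then $A^{*}_K=\bigcap_j A_{M_j(K)}$ is a nested intersection of compacta, so compactness is automatic, while the finite meet-equality of the original strictly dominating family passes to the truncations, and u.s.c. is recovered from the structural fact as above. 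The hardest step, I expect, is proving that this left limit is \emph{independent of the approximating sequence}, so that the two decreasing approximations to $K\cap L$ produce the same value; this is precisely where strict equality (rather than mere inclusion) must be combined with the Hausdorff-relative-compactness of convergent sequences in $\com(\p)$. Granting this, finite meet-equality and left-continuity hold for $A^{*}$, and $(\dagger)$ follows from the reduction above.
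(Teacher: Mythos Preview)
Your overall strategy matches the paper's: regularize the given strictly dominating family $\{F_K\}$ from above to gain upper semicontinuity while keeping finite meet-equality, then reduce $(\dagger)$ to the decreasing case via partial intersections. Your preliminary observations (for monotone compact-valued families, u.s.c.\ is equivalent to left-continuity along decreasing compacta; Hausdorff-convergent sequences of compacta in $\baire$ have relatively compact union) are correct and are exactly the mechanisms the paper exploits. However, neither of your two candidate regularizations is actually pinned down. In the first, the line ``forces these $L$'s to converge to $K$'' is false as stated, since a compact $L\subseteq U_{1/m}(K)$ may be a single point far from most of $K$; this is patchable by passing to $K\cup L$, but you then correctly identify that the closure in $\overline{A(U)}$ can destroy meet-equality, since $\overline{A(U)}\cap\overline{A(V)}$ may strictly contain $\overline{A(U)\cap A(V)}$. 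In the second, the decisive ``independence of the approximating sequence'' is left as an expectation, and it is not clear that functorial compact supersets $M_j(K)$ with $M_j(K)\cap M_j(L)$ cofinal with $M_j(K\cap L)$ can be produced.

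The idea you are missing, and which the paper supplies, resolves both difficulties simultaneously: take neither closures nor a single canonical approximant, but \emph{all} compact supersets of $K$ inside the $1/n$-neighbourhood, and no closure:
\[ A_K \;=\; \bigcap_{n\ge 1}\,\bigcup\bigl\{\,F_L : L\in\com(\p),\ K\subseteq L\subseteq U_n(K)\,\bigr\}. \]
The constraint $K\subseteq L$ forces $L\to K$ in the Hausdorff metric as $n\to\infty$, so your absorbing trick gives compactness of $A_K$ directly (choose $L_n$ with $K\subseteq L_n\subseteq U_n(K)$ and $x_n\in F_{L_n}$; then $M_n=\bigcup_{i\ge n}L_i$ is compact, lies in $U_n(K)$, and $x_n\in F_{M_1}$). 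The same constraint makes meet-equality automatic with no independence argument: if $p\in A_K\cap A_L$, pick $K_n,L_n$ with $K\subseteq K_n\subseteq U_n(K)$, $L\subseteq L_n\subseteq U_n(L)$ and $p\in F_{K_n}\cap F_{L_n}=F_{K_n\cap L_n}$; since $K\cap L\subseteq K_n\cap L_n$ and $K_n\cap L_n\to K\cap L$, the set $K_n\cap L_n$ is itself an admissible approximant for $K\cap L$, whence $p\in A_{K\cap L}$. Upper semicontinuity and $(\dagger)$ then follow along the lines you sketched.
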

\begin{proof}Suppose that $X$ is strictly dominated by the family $\{F_K\}_{K \in \com (\p)}$. 
We define $A_{\emptyset} = F_{\emptyset}$. For each nonempty $K \in \com (\p)$ and $n \in \nat$, let $U_n (K)$ be the open set $\{t \in \p : d(t, K) < \frac{1}{n}\}$. We define $A_K$ in the following fashion:
\[ A_K = \bigcap_{n=1}^{\infty} \bigcup \left \{F_L: L \in \com(\p), K \subseteq L \subseteq U_n(K) \right \}.\] 
We now show that $\{A_K\}$ has the desired properties. \\

Let us first show that each $A_K$ is compact. As $A_{\emptyset}=F_{\emptyset}$, it is compact by hypothesis. Hence, let us assume that $K \neq \emptyset$.  As $X$ is metric, it suffices to show that every infinite sequence $\{x_n\}$ in $A_K$ has a subsequence whose limit is in $A_K$. Fix such a sequence. For each $n \in \nat$,
let $L_n$ be a compact set such that $ K \subseteq L_n \subset U_n (K) $ and $x_n \in F_{L_n}$.  Note that $\{ L_n \}$ converges to $K$ in the Hausdorff metric. Hence, for each $n$, $M_n = \cup _{i=n}^{\infty} L_i$
is compact subset of $U_n(K)$. Moreover,  $\{x_n, x_{n+1}, \ldots \} \subseteq F_{M_n}$ for all $n$. As $F_{M_1}$ is compact, there is $p$ which is the limit of some subsequence of $\{x_n\}$. As $\{M_n\}$ are monotonic and 
$\{F_{M_n}\}$ compact,  we have that $p \in F_{M_n}$ for all $n$. Therefore, $p \in A_K$, completing the proof of the compactness of $A_K$.\\

We next observe that $F_K \subseteq A_K \subseteq X$. Therefore, $\bigcup \{A_K: K  \in \com(\p)\} = X$. \\

We next show that \[ \tag{*} \left [ K, L  \in \p(\com) \right ] \implies  \left [ A_{K \cap L} = A_K \cap A_L \right ].\]
First, observe that 
\[  \left [ K \subseteq L  \right ] \implies  \left [ F_K \subseteq F_L\right ],  \mbox{ and }\]
\[
\left [K \subseteq K_1 \subseteq U_n(K) \right] \implies \left [  L \subseteq K_1 \cup L \subseteq U_n(L) \right].
\]
These two facts together imply that
\[ \tag{**} \left [ K \subseteq L  \right ] \implies  \left [ A_K \subseteq A_L\right ] .\]

Therefore, 
 \[ \left [ K, L  \in \p(\com) \right ] \implies  \left [ A_{K \cap L} \subseteq A_K \cap A_L \right ].\]
 
 Let us prove the reverse containment now.  Let $p \in A_K \cap A_L$. We wish to show that $p \in A_{K \cap L}$.
 Let us first consider the case $K \cap L = \emptyset$.
 In this case, there exists $n$ such that $U_n(K) \cap U_n(L) = \emptyset$.  As $p \in A_{K} \cap A_{L}$,
there exists $K_1 \in U_n(K), L_1 \in U_n(L)$ such that $p \in F_{K_1} \cap F_{L_1}$.  As $F_{K_1} \cap F_{L_1} = F_{K_1 \cap L_1} = F_{\emptyset} =A _{\emptyset}= A_{K \cap L}$, we have that $p \in A_{K \cap L}$, proving the containment in this case.

 Let us next consider the case that $K \cap L \neq \emptyset$. For each positive integer $n$, let $K_n , L_n \in \p (\com)$ be such that
\begin{itemize}
\item $K \subseteq K_n \subseteq U_n(K)$, $L \subseteq L_n \subseteq U_n(L)$, and 
\item $p \in F_{K_n}$, $p \in F_{L_n}$. 
\end{itemize}
As $\{K_n\}$, $\{L_n\}$ converge, respectively, to $K, L$ in the Hausdorff metric and $K \subseteq K_n$ and $L \subseteq L_n$, we have that $\{K_n \cap L_n\}$ converge to $K \cap L$ in
the Hausdorff metric. Hence every  $U_n(K \cap L)$ contains some $K_m \cap L_m$. Since $F_{K_m} \cap F_{L_m} = F_{K_m \cap L_m}$,
we have that $p \in F_{K_m \cap L_m}$. Therefore, by the definition of $A_{K\cap L}$ we have that $p \in A_{K \cap L}$, and completing
the proof of the containment \[ \left [ K, L  \in \p(\com) \right ] \implies  \left [  A_K \cap A_L   \subseteq  A_{K \cap L} \right ]. \]\

We next show that $(\dagger)$ holds. Let  $K_1, K_2, \dots$  be elements in $ \p(\com) $. By $(**)$ we have that 
\[  \left [ A _{\bigcap _{n=1}^{\infty} K_n}  \subseteq  \bigcap_{n=1}^{\infty} A_{K_n}  \right  ] . \] 
To prove the reverse containment, let $p \in \bigcap_{n=1}^{\infty} A_{K_n}$. We first observe that by $(*)$ and induction we have that 
\[  \left [ A _{\bigcap _{n=1}^{m} K_n} = \bigcap_{n=1}^{m} A_{K_n}  \right  ] \] 
for all $m \in \nat$.   We again consider two cases. The first case is that $\cap_{n=1}^{\infty} K_n = \emptyset$. 
As $K_n$'s are compact, we have that there exists $m$ such that $\cap_{n=1}^m K_n = \emptyset$.
Then,
\[ p \in \bigcap_{n=1}^{\infty} A_{K_n}\subseteq \bigcap_{n=1}^{m} A_{K_n}=A _{\bigcap _{n=1}^{m} K_n}=A_{\emptyset}
= A _{\bigcap _{n=1}^{\infty} K_n},
\] 
completing the proof in this case.  Now let us consider the case $\bigcap _{n=1}^{\infty} K_n \neq \emptyset$.
By compactness of $K_n$'s we have that $\bigcap _{n=1}^{m} K_n \neq \emptyset$ for all $m$.
As $p \in \bigcap_{n=1}^{m} A_{K_n}=A _{\bigcap _{n=1}^{m} K_n}$ and $\bigcap _{n=1}^{m} K_n \neq \emptyset$ we may choose $L_m  \in \com(\p)$ such that $\bigcap _{n=1}^{m} K_n \subseteq L_m \subseteq U_m( \bigcap _{n=1}^{m} K_n)$
and $p \in F_{L_m}$.  We note that $\{\bigcap_{n=1}^mK_n\}$ converges in Hausdorff metric to $\bigcap_{n=1}^{\infty}K_n$
as $m \rightarrow \infty$.
As $\bigcap _{n=1}^{m} K_n \subseteq L_m \subseteq U_m( \bigcap _{n=1}^{m} K_n)$, we have that $\{L_m\}$
converges to $\bigcap_{n=1}^{\infty}K_n$ in the Hausdorff metric as well.  Hence, we have that
for every $m \in \nat$ there is $m' \in \nat$ such that ${L_{m'} }\subseteq U_m(  \bigcap_{n=1}^{\infty} {K_n})$. 
As $p \in F_{L_m'}$, we have that $p \in A _{\bigcap _{n=1}^{\infty} K_n}$, completing the proof.

Finally, let us show that the assignment $K \rightarrow A_K$ is an upper semicontinuous map. To this end, let $K \in \com (\p)$ 
and $V$ open in $X$ such that be such that $A_K \subseteq V$.  We need to show that there exists some $\delta  >0$
such that if the Hausdorff distance between $K$ and $L$ is less than $\delta$, then $A_L \subseteq V$. We will show
something stronger. Namely, there is an open set $U$ in $\p$ with $K \subseteq U$ such that if $L \in \com(\p)$ and $L \subseteq U$,
then $F_L \subseteq V$.  This suffices as for sufficiently large $n$, we have that $U_n(F_L) \subseteq U$, implying that
$A_{L} \subseteq V$. To obtain a contradiction, assume that there is no such $U$ and for each $n \in N$, let $K_n \in \com(\p)$ be such that $K_n \subseteq U_n (K)$ and $F_{K_n} \nsubseteq V$. We note that $\{K_n\}$ converges to $K$ in the Hausdorff metric and hence $L_n = \cup_{i=n}^{\infty} K _i \cup K$ is a compact subset of $U_n(K)$ containing $K$. As $F_{L_n}  \setminus V \neq \emptyset$
we have that $A_{K} \setminus V \neq \emptyset$, yielding a contradiction. 
\end{proof}
\begin{lem}\label{lem:borel2}
Suppose $X$ is a separable metric space and $X^*$ is the completion of $X$. If $X$ is strictly dominated by $\p$, then $X$ is a Borel subset of  $X^*$. 
\end{lem}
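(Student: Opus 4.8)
The plan is to feed the conclusion of Lemma~\ref{lem:borel1} into a \emph{Lusin--Souslin} argument: I will exhibit $X$ as the injective continuous image of a Borel (indeed $G_\delta$) subset of a Polish space, which forces $X$ to be Borel in $X^*$. By Lemma~\ref{lem:borel1} fix an upper semicontinuous $A\colon\com(\p)\to\com(X)$ with $X=\bigcup_K A_K$ satisfying $(\dagger)$; note that $(\dagger)$ also yields the pairwise identity $A_{K\cap L}=A_K\cap A_L$ and hence the monotonicity $K\subseteq L\Rightarrow A_K\subseteq A_L$. Identifying $\p$ with $\baire$, I parametrise a cofinal family of compacta by bounds: for $\beta\in\baire$ put $C_\beta=\{\alpha\in\baire:\alpha(n)\le\beta(n)\text{ for all }n\}$. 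Each $C_\beta$ is compact, the map $\beta\mapsto C_\beta$ is continuous into $\com(\p)$, and every compact $K\subseteq\p$ lies in some $C_\beta$; consequently $\tilde A(\beta):=A_{C_\beta}$ is upper semicontinuous and $X=\bigcup_{\beta\in\baire}A_{C_\beta}$. A standard argument (upper semicontinuity, compact values, and metrisability of $X^*$, separating the compactum $A_{C_\beta}$ from an outside point by an open neighbourhood) shows that the graph $B=\{(\beta,x)\in\baire\times X^*:x\in A_{C_\beta}\}$ is \emph{closed} in $\baire\times X^*$. Projecting $B$ already re-proves that $X$ is analytic; the work is to upgrade this to Borel.

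Next I extract a canonical least witness for each point. For $x\in X$ set $\mathcal{G}_x=\{\beta\in\baire:x\in A_{C_\beta}\}$. Monotonicity makes $\mathcal{G}_x$ an up-set in $(\baire,\le)$, while the identity $\bigcap_n C_{\beta^{(n)}}=C_{\inf_n\beta^{(n)}}$ combined with $(\dagger)$ shows that $\mathcal{G}_x$ is closed under countable pointwise infima. It follows that the coordinatewise infimum $\beta_x$, defined by $\beta_x(k)=\min\{\beta(k):\beta\in\mathcal{G}_x\}$, is attained and lies in $\mathcal{G}_x$; thus $\beta_x$ is the $\le$-least element of $\mathcal{G}_x$ and $x\in A_{C_{\beta_x}}$. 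This step is exactly where $(\dagger)$ is indispensable: without closure under countable intersections the infimum need not be a witness.

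The crux, and the step I expect to be the main obstacle, is to describe the graph of $x\mapsto\beta_x$ in a genuinely Borel way, since naively ``being the least witness'' reads as the co-analytic condition $\forall\beta'\,(x\in A_{C_{\beta'}}\Rightarrow\beta\le\beta')$. Here the up-set structure saves the day: because $\mathcal{G}_x$ is upward closed, $\beta=\beta_x$ holds if and only if $\beta\in\mathcal{G}_x$ and, for every $k$ with $\beta(k)\ge1$, the sequence $\beta^{-k}$ (equal to $\beta$ except that its $k$-th value is lowered by one) fails to lie in $\mathcal{G}_x$. Therefore
\[ \Phi:=\{(\beta,x):\beta=\beta_x\}=B\cap\bigcap_{k}\bigl\{(\beta,x):\neg\,(\beta(k)\ge1\ \wedge\ (\beta^{-k},x)\in B)\bigr\}. \]
Since $(\beta,x)\mapsto(\beta^{-k},x)$ is continuous and $B$ is closed, each bracketed set is open, so $\Phi$ is the intersection of a closed set with countably many open sets, i.e.\ a $G_\delta$ subset of the Polish space $\baire\times X^*$.

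Finally I would invoke Lusin--Souslin. The projection $\pi\colon\Phi\to X^*$, $(\beta,x)\mapsto x$, is continuous; it is injective because each $x\in X$ determines $\beta_x$ uniquely; and its range is exactly $\{x:\exists\beta\,(\beta,x)\in\Phi\}=X$. As $\Phi$ is Borel in a Polish space and $\pi$ is an injective continuous map into the Polish space $X^*$, the image $X=\pi(\Phi)$ is Borel in $X^*$, completing the proof. The only delicate points left to verify in detail are the closedness of $B$ in $\baire\times X^*$ and the minimality reformulation above; everything else is routine.
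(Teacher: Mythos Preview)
Your argument is correct. The overall strategy coincides with the paper's---invoke Lemma~\ref{lem:borel1}, exhibit a Borel ``graph of minimal witnesses'' on which the projection to $X^*$ is injective with image $X$, then apply Lusin--Souslin---but your execution differs in a way worth recording. The paper works directly in $\com(\p)\times X^*$: for each $p\in X$ the minimal witness is the compact set $K=\bigcap\{L:p\in A_L\}$, and minimality is detected by a countable family of decrements $K\mapsto K\setminus B_n$ along a clopen basis of $\p$; the resulting set $\s$ is Borel because $K\mapsto A_K$ and $K\mapsto A_{K\setminus B_n}$ are Borel maps. You instead pull everything back to $\baire\times X^*$ via the cofinal continuous parametrisation $\beta\mapsto C_\beta$; the minimal witness becomes the coordinatewise minimum $\beta_x$, and minimality is detected by the one--step decrements $\beta\mapsto\beta^{-k}$. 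This buys you two things: the graph $B$ is \emph{closed} (not merely Borel), so your set $\Phi$ is $G_\delta$ and hence itself Polish, and the verification of the minimality reformulation reduces to the transparent up--set observation that $\beta_x\le\beta^{-k}$ whenever $\beta_x(k)<\beta(k)$. The paper's route, on the other hand, stays intrinsic to $\com(\p)$ and makes no use of the product structure of $\baire$, which keeps it closer to the general framework of the note. Both arguments hinge on $(\dagger)$ at exactly the same point: guaranteeing that the candidate minimum actually witnesses membership.
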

\begin{proof} Let $K \rightarrow A_K$ be the assignment from Lemma~\ref{lem:borel1}. As the assignment $K \rightarrow A_K$ is upper semicontinuous, it is Borel and hence its graph 
$ \{(K, A_K) : K \in  \com(\p) \} $ is a Borel subset of  $\com(\p) \times \com (X^*)$. Furthermore, $ \{(x, L) : x \in L, L \in \com(X^*)\}$ 
is a closed subset of $X^* \times \com (X^*)$. Therefore, we have that
\[ \{ (K, x, A_K): K \in \com(\p), x \in A_K\}
\]
is a Borel subset of $\com(\p) \times X^* \times \com (X^*)$. Moreover, 
\[\T = \{ (K, x): K \in \com(\p), x \in A_K\}\] is the 1-1 projection of this set and hence is Borel.

  Let $\{ B_1, B_2, \ldots\} $ be a basis of $\p$ consisting of nonempty clopen sets. For each $n$,
let \[ \T_n = \{ (K, x): K \in \com(\p), K \cap  B_n \neq \emptyset,  x \in A_{K \setminus B_n} \}.
\]
Let us fix $n$ for the moment. The assignment $K \rightarrow K\setminus B_n$ is continuous
and the assignment $K \rightarrow A_K$ is upper semicontinuous. Hence, the assignment
$K \rightarrow A_{K \setminus B_n}$ is Borel.  Moreover, $\{K \in \com (\p): K \cap B_n \neq \emptyset \}$
is closed. Hence, the set
\[ \{ (K, x, A_{K \setminus B_n}): K \in \com(\p), K \cap  B_n \neq \emptyset,  x \in A_{K \setminus B_n} \}
\]
is Borel. Now $\T _n$ is simply 1-1 project of this set and hence itself is Borel.
Now let us consider the Borel set
\[ \s= \T \setminus \left (  \cup_{n=1}^{\infty} \T _n \right ).
\] 
We first observe that if $(K, x) \in \s$, then $x \in A_K$ but $x \notin A_L$ for any proper subset $L$ of $K$. Indeed, this is true for otherwise we can find $n$ such that $K \cap B_n \neq \emptyset$ and $L \subset K \setminus B_n$. For this $n$, we would have that $(K,x) \in \T_n$, contradicting that $(K,x) \in \s$.

 To conclude that $X$ is Borel, it will suffice to show that $\pi_2$, the projection onto the second coordinate, is 1-1 on $\s$ and $\pi_2(S) =X$. We first show that $\pi_2$ is 1-1.
To obtain a contradiction, assume that $(K,x), (L,x) \in \s$ with $K \neq L$. Then,
$x \in A_K \cap A_L = A_{K \cap L}$. As $K \neq L$, $K \cap L$ is a proper subset of either $K$ or $L$,
yielding a contradiction. Hence, $\pi_2$ is 1-1 on $\s$. 

To observe that $\pi_2(S) =X$, let $p \in X$. Let $\g _p = \{ L \in \com (\p): p \in A_L\}$ and let $K =\bigcap _{L \in \g _p} A_L$.
We claim that $p \in A_K$. We consider two cases, $K = \emptyset$ and $ K \neq \emptyset$. If $K = \emptyset$, then
there exists $K_1, \ldots, K_n \in \g _p$ such that $\bigcap_{i=1}^n K_i = \emptyset$. Then,
\[ p \in \bigcap_{i=1}^n A_{K_i} = A_{\bigcap_{i=1}^n K_i} = A_{\emptyset} =A_K.
\]
Let us now consider the case $K \neq \emptyset$. Then there exists a sequence of compact set $\{K_n\}_{n=1}^{\infty}$
in $\g_p$ such that $K = \cap_{n=1}^{\infty}K_n$. (This is so because $\p$ is hereditarily Lindel\"of as it is a separable metric 
space. Hence some countable subcollection
of $\{L^c: L \in \g_p\}$ covers $K^c$.)
By Condition $(\dagger)$, we have that 
\[ p \in \bigcap_{n=1}^{\infty}A_{K_n} = A_{\bigcap_{n=1}^{\infty}{K_n}} = A_K.
\]
Therefore, $(K, p) \in \T$. By the fashion in which $K$ is defined, we have that $(K, p)$
is not in $\T_n$ for any $n$. Hence $(K, p) \in \s$, completing the proof.
\end{proof}
\begin{thm}\label{thm:repeat} Let $X$ be a separable metric space. Then, $X$ is strictly $\p$-dominated if and only if $X$ is a Borel set of of $X^*$, the completion of $X$. 
\end{thm}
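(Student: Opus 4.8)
The plan is to prove the two implications of the biconditional separately, assembling the lemmas already in hand together with the classical characterization of Borel sets by the $K$-Lusin property recorded in the terminology section. The observation that makes this short is that all the genuine content has been isolated into Lemmas~\ref{lem:easydirection}, \ref{lem:borel1}, and~\ref{lem:borel2}; at the level of the theorem the work is purely a matter of connecting them.

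For the implication ``$X$ Borel in $X^*$ $\implies$ $X$ strictly $\p$-dominated,'' I would argue as follows. Since $X$ is a separable metric space, the classical fact quoted at the end of Section~2 gives that $X$ is $K$-Lusin if and only if $X$ is a Borel subset of its completion $X^*$; hence from the hypothesis we obtain that $X$ is $K$-Lusin. Every separable metric space is in particular Tychnoff, so the hypotheses of Lemma~\ref{lem:easydirection} are met, and that lemma yields directly that $X$ is strictly $\p$-dominated. This completes the first direction.

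For the converse, ``$X$ strictly $\p$-dominated $\implies$ $X$ Borel in $X^*$,'' I would simply invoke Lemma~\ref{lem:borel2}: its hypothesis is exactly that $X$ is a separable metric space strictly dominated by $\p$, and its conclusion is that $X$ is a Borel subset of $X^*$. Combining the two implications gives the stated equivalence.

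As for where the difficulty truly lies, it is \emph{not} in the theorem itself, whose proof is the short synthesis above, but in the two structural lemmas it rests on. The crux is Lemma~\ref{lem:borel1}, where one must upgrade a raw strictly dominating family $\{F_K\}$ to an assignment $K \mapsto A_K$ that is simultaneously upper semicontinuous, still strictly dominating (in the form $(*)$), and--critically--stable under \emph{countable} intersections (condition $(\dagger)$); the passage through the thickenings $U_n(K)$ and Hausdorff-metric convergence of the $K_n \cap L_n$ is what buys the countable version from the finite one. The subsequent descriptive-set-theoretic step in Lemma~\ref{lem:borel2}--carving out the Borel set $\s$ by deleting the ``non-minimal-support'' pairs via the sets $\T_n$, so that the second-coordinate projection becomes injective with image exactly $X$--is the other substantive ingredient. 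Once these are granted, the theorem is immediate.
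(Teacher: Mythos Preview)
Your proposal is correct and follows essentially the same approach as the paper's proof: invoke the classical equivalence of Borel and $K$-Lusin to feed into Lemma~\ref{lem:easydirection} for one direction, and cite Lemma~\ref{lem:borel2} directly for the other. Your additional commentary on where the real work lies (Lemmas~\ref{lem:borel1} and~\ref{lem:borel2}) is accurate but goes beyond what the paper includes at this point.
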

\begin{proof} If $X$ is a Borel subset of $X^*$, then $X$ is a $K$-Lusin set and we obtain that $X$ is strictly $\p$-dominated  by Lemma~\ref{lem:easydirection}.  The other direction follows from Lemma~\ref{lem:borel2}.
\end{proof}


\begin{thebibliography}{1}

\bibitem{c} B. Cascales, {\it On K -analytic locally convex spaces}, Arch. Math. {\bf 49 } (1987) 232-244.
\bibitem{co} B. Cascales, J. Orihuela, {\it A sequential property of set-valued maps}, J. Math. Anal. Appl. {\bf 156}  (1) (1991) 86-100.
\bibitem{cot} B. Cascales, J. Orihuela, V. V. Tkachuk, {\it Domination by second countable spaces and Lindel\"of $\Sigma$-property}, Topology Appl. {\bf 158}  (2011), no. 2, 204-214.
\bibitem{christensen} J. P. R. Christensen, {\it Topology and Borel Structure}, Math. Stud., vol. 10, North-Holland, Amsterdam, 1974.
\bibitem{engleking} R. Engelking, {\it General Topology} , PWN, Warszawa, 1977.
\bibitem{rogersjayne} C.A. Rogers, J.E. Jayne, { \it $K$-analytic sets}, in: Analytic Sets, Academic Press Inc., London, 1980, pp. 1-181.
 \bibitem{talagrand}   M. Talagrand, {\it Espaces de Banach faiblement K -analytiques}, Ann. of Math. {\bf 110} (1979) 407-438.
 \bibitem{tkachuk}V. V. Tkachuk, {\it A space $Cp (X)$ is dominated by irrationals if and only if it is $K$-analytic} , Acta Math. Hungar. {\bf 107} (4) (2005) 253-265.
 \bibitem{willard} S. Willard, {\it General Topology}, Reprint of the 1970 original [Addison-Wesley, Reading, MA; MR0264581]. Dover Publications, Inc., Mineola, NY, 2004.
\end{thebibliography}
\end{document}